\documentclass[12pt]{amsart}

\usepackage{amsfonts,amssymb,amsmath,textcomp}

\textwidth=15cm
\hoffset=-1.6cm

 2

\usepackage{amsmath}
\usepackage{amssymb}
\usepackage{amsthm}
\usepackage[mathscr]{euscript}
\usepackage{textcomp}
\usepackage{amsmath}
\usepackage{amssymb}
\usepackage{amsthm}
\newtheorem{thm}{Theorem}[section]

\newtheorem{lem}{Lemma}
\newtheorem{rem}{Remark}

\newcommand{\Z}{\mathbb{Z}}
\newcommand{\Q}{\mathbb{Q}}

\newcommand{\C}{\mathbb{C}}
\begin{document}
\title
{Distribution of prime ideals across ideal classes in the class groups}
\author{Prem Prakash Pandey	}

\address[Prem Prakash Pandey]{School of Mathematical Sciences,
                                            NISER Bhubaneswar (HBNI),
                                            Jatani, Khurda-650 052, India.}
                                            
\email{premshivaganga@gmail.com}

\subjclass[2010]{11R44, 11R40}

\date{\today}

\keywords{residue degree, class group, annihilators of class group, norm equations}

\maketitle
\begin{abstract}
In this article we investigate the distribution of prime ideals of residue degree bigger than one across the ideal classes in the class group of a number field $L$. A criterion for the class group of $L$ being generated by the classes of prime ideals of residue degree $f>1$ is provided. Further, some consequences of this study on the solvability of norm equations for $L/\mathbb{Q}$ and on the problem of finding annihilators for relative extensions are discussed.  
\end{abstract}
\section{Introduction}
In this article, $L$ and $K$ denote number fields such that $K$ is a subfield of $L$. We assume that $L/K$ is Galois and write $G:=Gal(L/K)$. Let $\mathfrak{p}$ be a prime ideal of $L$, $\mathbf{p}$ denote the prime ideal of $K$ below $\mathfrak{p}$. The residue degree $[ \mathbb{O}_L/\mathfrak{p}:\mathbb{O}_K / \mathbf{p}]$ will be denoted by $res^L_K(\mathfrak{p})$, and when $K=\mathbb{Q}$ we should simply write $res(\mathfrak{p})$.  Also we let $\ell$ be an odd prime number and fix a primitive $\ell^{th}$ root of unity $\zeta_{\ell}$. Let $\Q(\zeta_{\ell})$ denote the subfield of $\C$ obtained by adjoining $\zeta_{\ell}$ to $\Q$. An important result in algebraic number theory is the following theorem, which is one of many density theorems. 
\begin{thm}\label{TA}[ Theorem 4.6, \cite{GJJ}]
Every ideal class in the class group of $L$ contains infinitely many prime ideals $\mathfrak{p}$ of residue degree one, that is, $res(\mathfrak{p})=1$.
\end{thm}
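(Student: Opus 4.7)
My plan is to combine Chebotarev's density theorem with the Artin reciprocity for the Hilbert class field. Let $H$ be the Hilbert class field of $L$, so that $H/L$ is unramified abelian and the Artin map identifies $Gal(H/L)$ with $Cl(L)$; the class of an unramified prime $\mathfrak{p}$ of $L$ corresponds to its Frobenius $Frob_{\mathfrak{p}}(H/L)$. Since $L/\mathbb{Q}$ need not be Galois, I pass to $M$, the Galois closure of $H$ over $\mathbb{Q}$, and set $G=Gal(M/\mathbb{Q})$, $N=Gal(M/L)$, $K=Gal(M/H)$. Because $H/L$ is abelian, $K\trianglelefteq N$ and $N/K\cong Gal(H/L)\cong Cl(L)$.

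Next I translate ``$res(\mathfrak{p})=1$ and $\mathfrak{p}\in C$'' into a group-theoretic condition on Frobenius elements. For an unramified rational prime $p$ and a prime $\mathfrak{Q}$ of $M$ above $p$, put $\phi=Frob_{\mathfrak{Q}}(M/\mathbb{Q})$ and $\mathfrak{p}=\mathfrak{Q}\cap L$. The primes of $L$ above $p$ are indexed by the $\langle\phi\rangle$-orbits on $G/N$, and the orbit of the trivial coset $N$ corresponds to $\mathfrak{p}$, with residue degree equal to the smallest $k\geq 1$ such that $\phi^k\in N$. Hence $res(\mathfrak{p})=1$ exactly when $\phi\in N$, and in that case the image of $\phi$ in $N/K$ coincides with $Frob_{\mathfrak{p}}(H/L)$, which under Artin encodes the class of $\mathfrak{p}$ in $Cl(L)$.

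Given a target class $C\in Cl(L)$, I lift its image in $N/K$ to some $\phi_0\in N$ and invoke Chebotarev for $M/\mathbb{Q}$: the set of rational primes $p$ whose Frobenius conjugacy class in $G$ contains $\phi_0$ has positive Dirichlet density, and in particular is infinite. For each such $p$ I may choose a prime $\mathfrak{Q}$ of $M$ above $p$ with $Frob_{\mathfrak{Q}}=\phi_0$ (any conjugate Frobenius arises from a suitable translate of $\mathfrak{Q}$), so that $\mathfrak{p}=\mathfrak{Q}\cap L$ satisfies $res(\mathfrak{p})=1$ and lies in the class $C$. Distinct primes $p$ yield distinct primes $\mathfrak{p}$, furnishing the required infinite family.

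The one step demanding care is the orbit description of $res(\mathfrak{p})$, but this is the standard orbit-length formula for the splitting of a prime in a possibly non-normal subextension, applied to the identity coset in $G/N$; the rest of the argument is a direct assembly of Chebotarev for $M/\mathbb{Q}$ with Artin reciprocity for $H/L$.
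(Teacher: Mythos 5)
The paper does not prove Theorem \ref{TA} at all: it is quoted from Janusz \cite{GJJ} as a known density theorem, so there is no internal proof to compare against. Your argument, read on its own terms, is correct and complete. The reduction of ``$res(\mathfrak{p})=1$'' to ``$\phi\in N$'' via the orbit of the identity coset in $G/N$ is right (the residue degree is $[\langle\phi\rangle:\langle\phi\rangle\cap N]$, which for a cyclic group equals the least $k$ with $\phi^k\in N$), the normality of $Gal(M/H)$ in $Gal(M/L)$ follows from $H/L$ being Galois, and choosing $\mathfrak{Q}$ with Frobenius exactly $\phi_0$ (rather than merely conjugate to it) is legitimate because the Frobenius elements over a fixed $p$ form a full conjugacy class. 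It is worth noting that the textbook proof you are replacing is usually shorter: apply Chebotarev (or just the surjectivity of the Artin map) directly to the abelian extension $H/L$ to get a positive-density set of primes $\mathfrak{p}$ of $L$ with prescribed Artin symbol, and then observe that the primes of $L$ with $res(\mathfrak{p})>1$ have Dirichlet density zero, so the positive-density set must contain infinitely many degree-one primes. Your route through the Galois closure $M/\mathbb{Q}$ avoids that density-zero observation and produces the degree-one primes directly, at the cost of the coset bookkeeping; both are valid, and yours has the minor advantage of making explicit which rational primes carry the desired $\mathfrak{p}$.
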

Let $\mathfrak{c}$ be an ideal class in the class group of $L$ and $\mathfrak{p}$ be a prime ideal in $\mathfrak{c}$. If $p$ denotes the rational prime lying below $\mathfrak{p}$ then, by Theorem \ref{TA}, one may assume that $p$ is unramified in $L/\mathbb{Q}$ and the following factorization holds
$$p \mathbb{O}_L= \prod_{\sigma \in Gal(L/\mathbb{Q})} \sigma(\mathfrak{p}).$$ 
Thus, for $N= \sum_{\sigma \in Gal(L/\mathbb{Q})} \sigma$ we have
$$\mathfrak{c}^N=[\mathfrak{p}]^N=[(p)],$$ which is trivial. This shows that $N$ annihilates the class group $C\ell(L)$. But $N$ is not very useful as annihilator, as in applications of annihilators, mostly one uses $(1-\sigma)\theta$ for annihilation, with $\theta$ being an annihilator and $\sigma $ being the complex conjugation (e.g. see the works of Mihailescu proving Catalan's conjecture in \cite{RS} or \cite{YB}). 

There are various accounts of finding elements in the group ring $\Z[G]$ which annihilate the class group $C\ell(L)$ of $L$. When $K=\Q$, the Stickelberger theorem is a celebrated result in this direction (see \cite{SL} or \cite{LW}). In full generality (that is when $K$ is arbitrary) there is no description of elements in $\Z[G]$ which annihilate the class group $C\ell(L)$ (though there are some results in special cases, see \cite{SD} or \cite{JWS, JT}). On the other hand, if $K$ has class number one, then it is easy to see (for example, as illustrated after Theorem \ref{TA}), that the $G$-trace $N=\sum_{\sigma \in G} \sigma$ annihilates the class group $C\ell(L)$. The perspective taken in this article is to explore an analogue of Theorem \ref{TA} for higher residue degrees and possible consequences. For extension $L/K$, we define the set 
\begin{eqnarray*}
\mathscr{R}_K^L & := & \{f \in \mathbb{N}: C\ell(L) \mbox{ is generated by classes of unramified prime ideals }\\ & & \mathfrak{p} \mbox{ with } res^L_K(\mathfrak{p})=f\}.
\end{eqnarray*}

From Theorem \ref{TA} it follows that $1 \in \mathscr{R}_K^L$ for any extension $L/K$. For $L=\Q(\zeta_{\ell})$ and $K=\Q$, Kummer had proved this using only algebraic tools (see \cite{Kum} or chapter 9 in \cite{RS}). This algebraic proof has been further extended by Lenstra and Stevenhagen in more general set-up (see \cite{LS}). To the best of our knowledge, the following question (which can be seen as a generalization of Theorem \ref{TA}) has not been addressed in the literature.

\noindent{\bf Question 1:} When does the set $\mathscr{R}_K^L$ has more than one element?

The question is interesting only when the class number of $L$ is bigger than $1$. When $G$ is cyclic and $K$ has class number $1$, then each element of $\mathscr{R}_K^L$ gives rise to an annihilator of the class group $C\ell(L)$. Let $f \in \mathscr{R}_K^L$ and let $G'$ be the unique subgroup of $G$ of order $f$. If $\{\sigma_1, \ldots, \sigma_g\}$ is a complete set of representatives of the elements of $G/G'$, then we put $\theta_f =\sum_{i=1}^g \sigma_i$ and prove the following theorem.

\begin{thm}\label{T3}
Consider a cyclic extension $L/K$ of number fields. Then at least one of the following holds:\\
(1) the class number of $K$ is bigger than one,\\
(2) the element $\theta_f$ annihilates $C\ell(L)$ for each $f\in \mathscr{R}_K^L$.
\end{thm}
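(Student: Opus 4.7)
The plan is to prove the contrapositive: assuming that the class number of $K$ equals one, I will show $\theta_f$ kills $C\ell(L)$ for every $f\in\mathscr{R}_K^L$. By definition of $\mathscr{R}_K^L$, the group $C\ell(L)$ is generated by classes $[\mathfrak{p}]$ where $\mathfrak{p}$ is an unramified prime of $L$ with $res^L_K(\mathfrak{p})=f$, so it is enough to verify that $[\mathfrak{p}]^{\theta_f}$ is trivial for each such generator.

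Fix such a prime $\mathfrak{p}$ and let $\mathbf{p}=\mathfrak{p}\cap \mathbb{O}_K$. The first key step uses that $L/K$ is cyclic: the decomposition group $D(\mathfrak{p})\subseteq G$ has order equal to $res^L_K(\mathfrak{p})=f$ (since $\mathfrak{p}$ is unramified), and because $G$ is cyclic it contains a \emph{unique} subgroup of order $f$, which is exactly $G'$. Hence $D(\mathfrak{p})=G'$, and the orbit of $\mathfrak{p}$ under $G$ consists precisely of the primes $\sigma_1(\mathfrak{p}),\dots,\sigma_g(\mathfrak{p})$, where $\{\sigma_1,\dots,\sigma_g\}$ is the fixed set of coset representatives of $G/G'$. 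The standard factorization in a Galois extension then gives
\[
\mathbf{p}\,\mathbb{O}_L \;=\; \prod_{i=1}^{g}\sigma_i(\mathfrak{p}).
\]

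Taking ideal classes on both sides yields
\[
[\mathfrak{p}]^{\theta_f} \;=\; \prod_{i=1}^{g}[\sigma_i(\mathfrak{p})] \;=\; \bigl[\mathbf{p}\,\mathbb{O}_L\bigr].
\]
Now the assumption that $K$ has class number one forces $\mathbf{p}=(\alpha)$ for some $\alpha\in \mathbb{O}_K$, whence $\mathbf{p}\,\mathbb{O}_L=(\alpha)\mathbb{O}_L$ is principal in $L$, so $[\mathfrak{p}]^{\theta_f}=1$. Since this holds on a generating set of $C\ell(L)$, the element $\theta_f$ annihilates $C\ell(L)$, completing the proof.

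There is no serious obstacle: the whole argument rests on the two structural inputs that (i) cyclicity of $G$ pins down the decomposition group at every prime of residue degree $f$ to be the specific subgroup $G'$ used to build $\theta_f$, and (ii) principality in $K$ propagates to principality of extended ideals in $L$. The only place one must be slightly careful is to note that the factorization of $\mathbf{p}\mathbb{O}_L$ exactly matches the coset representatives chosen in the definition of $\theta_f$, which is guaranteed by the uniqueness of subgroups of each order in a cyclic group.
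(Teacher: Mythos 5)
Your proof is correct and follows essentially the same route as the paper: reduce to generators of residue degree $f$, identify the decomposition group with the unique subgroup $G'$ of order $f$ in the cyclic group $G$, use the factorization $\mathbf{p}\,\mathbb{O}_L=\prod_i\sigma_i(\mathfrak{p})$, and invoke the class number one hypothesis on $K$ to conclude principality. If anything, your handling of the choice of coset representatives (via the orbit--stabilizer observation) is slightly cleaner than the paper's remark that two such choices differ by a single $\tau\in G$.
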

For $f=1$ we have $\theta_1=N$, the $G$-trace. Using Theorem \ref{T3}, in section 3, we shall show that for the fields $L=\Q(\zeta_{23})$ and $K=\mathbb{Q}$ we have $\mathscr{R}_K^L=\{1\}$. Next, we give a criterion to determine if a positive integer $f$ is in $\mathscr{R}_K^L$ or not. For this, let $H:=H(L)$ be the Hilbert class field of $L$ (maximal unramified abelian extension of $L$). Then $H/K$ is Galois (see Lemma \ref{L2}) and we have a short exact sequence
\begin{equation}\label{D}
0 \longrightarrow Gal(H/L) \longrightarrow Gal(H/K) \longrightarrow Gal(L/K) \longrightarrow 0.
\end{equation}
For any $\sigma \in Gal(L/K)$ we use $\tilde{\sigma}$ for any lift of $\sigma$ to $H$, that is, $\tilde{\sigma} \in Gal(H/K)$ and $\tilde{\sigma}\shortmid_L=\sigma$. Now we are in a position to state the criterion.
\begin{thm}\label{T1}
For an integer $f>1$, the ideal class group $C\ell(L)$ is generated by the classes of prime ideals $\mathfrak{p}$ with $res^L_K(\mathfrak{p})=f$ if and only if the set $$\{(\tilde{\sigma})^f: \sigma \in Gal(L/K) \mbox{ and the order of }\sigma \mbox{ is }f\}$$ generates the group $Gal(H/L)$.
\end{thm}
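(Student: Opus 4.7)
The approach is to translate both sides of the claimed equivalence into statements about Frobenius elements via the Artin reciprocity map for $H/L$, and then invoke Chebotarev's density theorem on the Galois extension $H/K$ to match them up.

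The main tool is the Artin isomorphism $\mathrm{Art}\colon C\ell(L) \xrightarrow{\sim} Gal(H/L)$, which sends the class $[\mathfrak{p}]$ of an unramified prime $\mathfrak{p}$ of $L$ to its Frobenius $\mathrm{Frob}_{H/L}(\mathfrak{P})$, where $\mathfrak{P}$ is any prime of $H$ above $\mathfrak{p}$. The left-hand side of the equivalence thus becomes the statement that $Gal(H/L)$ is generated by the Frobenius elements $\mathrm{Frob}_{H/L}(\mathfrak{P})$ for $\mathfrak{p}=\mathfrak{P}\cap L$ ranging over unramified primes of $L$ with $res_K^L(\mathfrak{p})=f$. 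The pivotal calculation to carry out is that, for such a prime, the multiplicativity of Frobenius in towers (valid because $H/L$ is everywhere unramified, so $\mathfrak{P}$ is unramified in $H/K$) yields
\[\mathrm{Frob}_{H/L}(\mathfrak{P}) \;=\; \mathrm{Frob}_{H/K}(\mathfrak{P})^{\,f},\]
and the restriction $\tilde{\sigma}\shortmid_L$ of $\tilde{\sigma}:=\mathrm{Frob}_{H/K}(\mathfrak{P})$ equals $\sigma:=\mathrm{Frob}_{L/K}(\mathfrak{p})$, an element of $Gal(L/K)$ of order exactly $f$. Hence each such Frobenius in $Gal(H/L)$ has the shape $\tilde{\sigma}^f$ with $\sigma$ of order $f$, that is, lies in the set displayed in the theorem.

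With this identification the forward direction is immediate: $\mathrm{Art}$ sends a generating family of classes to a generating family of $Gal(H/L)$ already contained in $\{\tilde{\sigma}^f\}$. For the converse I would invoke Chebotarev's density theorem on $H/K$, which is Galois by Lemma \ref{L2}. Given any $\sigma \in Gal(L/K)$ of order $f$ and any lift $\tilde{\sigma} \in Gal(H/K)$, Chebotarev produces infinitely many primes $\mathfrak{P}$ of $H$, unramified in $H/K$, with $\mathrm{Frob}_{H/K}(\mathfrak{P})=\tilde{\sigma}$. The contracted prime $\mathfrak{p} = \mathfrak{P}\cap L$ is then unramified in $L/K$ with $res_K^L(\mathfrak{p})=f$, and $\mathrm{Art}([\mathfrak{p}])=\tilde{\sigma}^f$. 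If $\{\tilde{\sigma}^f\}$ generates $Gal(H/L)$, transporting back under $\mathrm{Art}^{-1}$ exhibits generators of $C\ell(L)$ consisting of classes of primes of residue degree $f$.

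The principal obstacle I anticipate is realizing every element $\tilde{\sigma}^f$ in the prescribed algebraic set as a genuine Frobenius coming from a prime of residue degree $f$; this is precisely what Chebotarev buys, and it crucially requires $H/K$ to be Galois, which is exactly the content of Lemma \ref{L2}. The remaining items—the transitivity formula for Frobenius in the tower $K\subset L \subset H$, the identification of $\mathrm{ord}(\mathrm{Frob}_{L/K}(\mathfrak{p}))$ with $res_K^L(\mathfrak{p})$ in the unramified case, and the observation that the finitely many ramified primes can be ignored (each ideal class contains infinitely many unramified primes by Theorem \ref{TA})—are routine verifications.
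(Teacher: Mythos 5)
Your proposal is correct and follows essentially the same route as the paper: both directions are handled via the Artin isomorphism $C\ell(L)\cong Gal(H/L)$, the tower relation $\left(\frac{\wp}{H/L}\right)=\left(\frac{\wp}{H/K}\right)^f$ for primes of residue degree $f$ (the paper's Lemma \ref{L1} plus the abelianness of $H/L$), and the Chebotar\"ev density theorem applied to $H/K$ to realize each $\tilde{\sigma}^f$ as the Artin image of a prime of residue degree $f$. The only difference is cosmetic: you label as ``forward'' the implication the paper proves second, but the two arguments coincide.
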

We give a proof of Theorem \ref{T1} in the next section. The proof also demonstrates that the size of the subgroup generated by the set $\{(\tilde{\sigma})^f: \sigma \in Gal(L/K) \mbox{ is of order } f \}$ measures the size of the subgroup of the class group which is generated by the classes of prime ideals $\mathfrak{p}$ with $res^L_K(\mathfrak{p})=f$. This can be exploited to study the (absolute) norm equations. The study of solvability of norm equations for number fields and algorithms to determine solutions to norm equations are well pursued (see \cite{DG,VA, FJP, BN} and references in there). The special case $\mathscr{R}_K^L=\{1\}$ has an immediate bearing on the solvability of the norm equations. In this direction we give the following sufficient condition for the solvability of norm equations. 
\begin{thm}\label{T2}
Let $L$ be a number field whose class number is a prime number. If $\mathscr{R}_{\mathbb{Q}}^L=\{1\}$ then the norm equation
\begin{equation}\label{e0}
|N_{L/\mathbb{Q}}(x)|=a, a\geq 0
\end{equation}
is solvable whenever the prime divisors $p$ of $a$ are unramified, of residue degree bigger than $1$ and $\upsilon_p(a)$ is a multiple of the residue degree of $p$. Here $\upsilon_p(a)$ is the $p$-adic valuation of $a$, that is, the highest power of $p$ which divides $a$.
\end{thm}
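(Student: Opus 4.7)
The plan is to reduce the norm equation to a product of explicit principal prime elements of prescribed absolute norm, exploiting the fact that a class group of prime order has no nontrivial proper subgroup. First I would dispose of $a=0$ and $a=1$ via $x=0$ and $x=1$ respectively. For $a\ge 2$, factor $a=\prod_{p\mid a} p^{v_p(a)}$; under the standing hypothesis of the paper that $L/\mathbb{Q}$ is Galois, all primes of $L$ above a given rational prime $p$ share a common residue degree $f_p$, which by hypothesis satisfies $f_p>1$, $p$ unramified in $L$, and $f_p\mid v_p(a)$, so write $v_p(a)=f_p m_p$.

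The key intermediate claim, which I would prove first, is that every unramified prime ideal $\mathfrak{P}$ of $L$ with $res(\mathfrak{P})>1$ is principal. Fix any $f>1$ and let $S_f\subseteq C\ell(L)$ be the subgroup generated by the classes $[\mathfrak{P}]$ for unramified primes $\mathfrak{P}$ of residue degree exactly $f$. Since $\mathscr{R}_{\mathbb{Q}}^L=\{1\}$, we have $f\notin \mathscr{R}_{\mathbb{Q}}^L$, so $S_f$ is a proper subgroup of $C\ell(L)$; as $|C\ell(L)|$ is prime, the only proper subgroup is the trivial one, so $S_f$ is trivial and $[\mathfrak{P}]=1$ for every such $\mathfrak{P}$.

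With this in hand, for each $p\mid a$ I would pick any prime $\mathfrak{P}_p$ of $L$ above $p$ and write $\mathfrak{P}_p=(\alpha_p)$ with $\alpha_p\in \mathbb{O}_L$ via the claim; then $|N_{L/\mathbb{Q}}(\alpha_p)|=N(\mathfrak{P}_p)=p^{f_p}$. Taking $x:=\prod_{p\mid a}\alpha_p^{m_p}$ yields
$$|N_{L/\mathbb{Q}}(x)|=\prod_{p\mid a} p^{f_p m_p}=\prod_{p\mid a} p^{v_p(a)}=a,$$
as required.

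The principality claim is the only substantive step; once it is established, the rest is multiplicative bookkeeping on prime generators. The conceptual crux, and what I expect to highlight as the main point, is that primality of $h_L$ is precisely what upgrades the weak statement ``unramified primes of residue degree $f$ do not generate $C\ell(L)$'' into the strong uniform statement ``every unramified prime of residue degree $f$ is principal''; if $h_L$ were composite the argument would break down, because a proper nontrivial subgroup of $C\ell(L)$ could contain the classes $[\mathfrak{P}]$ without those primes being principal, and the explicit construction of $x$ from prime generators would no longer be available.
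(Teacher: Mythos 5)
Your proposal is correct and follows essentially the same route as the paper: reduce to showing that each unramified prime $\mathfrak{p}$ of residue degree $f>1$ is principal, take a generator of norm $\pm p^{f}$, and finish by multiplicativity of the norm. The only (cosmetic) difference is that you establish the principality claim directly from the definition of $\mathscr{R}_{\mathbb{Q}}^{L}$ together with the fact that a group of prime order has no proper nontrivial subgroup, whereas the paper cites Remark~\ref{r1} (phrased via the subgroup of $Gal(H/L)$ generated by the $(\tilde{\sigma})^{f}$), which encodes exactly the same fact.
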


The only reason for considering the absolute norm equation in Theorem \ref{T2} is that we can not say, in general, whether $-1$ is a norm or not. Thus, if the extension $L/\mathbb{Q}$ is not totally real then in Theorem \ref{T2} we can replace equation (\ref{e0}) by 
$$N_{L/\mathbb{Q}}(x)=a.$$
In section 3, Theorem \ref{T2} is used to give a very concrete description of the solvability of the norm equations for the extension $\mathbb{Q}(\zeta_{23})/ \mathbb{Q}$ in a very elementary way (see Theorem \ref{T4}).

\section{Proofs}
We begin this section with a proof of Theorem \ref{T3}.
\begin{proof}
[\bf Proof of Theorem \ref{T3}]
If the class number of $K$ is bigger than one then nothing to prove. So we assume that the class number of $K$ is one. Let $\mathfrak{c}$ be an ideal class in $C\ell(L)$. Then 
$$\mathfrak{c} =[\mathfrak{p}_1] [\mathfrak{p}_2] \ldots [\mathfrak{p}_t]$$ for unramified prime ideals $\mathfrak{p}_1,  \ldots ,\mathfrak{p}_t  $ of residue degree $f$. Thus it is enough to prove that $\theta_f$ annihilates all the unramified prime ideals of residue degree $f$.\\ 
Let $\mathfrak{p}$ be an unramified prime ideal of the residue degree $f$ in $L$ and let $\mathbf{p}$ be the prime ideal of $K$ lying below $\mathfrak{p}$. Let $D_{\mathfrak{p} }$ denote the decomposition group at $\mathfrak{p}$. Then $D_{\mathfrak{p} }$ is the unique subgroup of $G$ of order $f$ and it does not depend on $\mathfrak{p}$. If $\{ \sigma_1, \ldots, \sigma_g \}$ is a complete set of representatives of $G/D_{\mathfrak{p}}$, then $\{\sigma_i(\mathfrak{p}): 1 \leq i \leq g \}$ is the set of all conjugates of $\mathfrak{p}$. Thus the factorization of $\mathbf{p} \mathbb{O}_L$ is given by 
$$ \mathbf{p} \mathbb{O}_L= \prod_{i=1}^g \sigma_i(\mathfrak{p}).$$ 
Since $\theta_f$ is a multiple of $\sum_{i=1}^g \sigma_i$ by some $\tau \in G$, it follows that
$$\mathfrak{p}^{\theta_f}=\tau(\mathbf{p} \mathbb{O}_L)$$
is principal. Thus $\theta_f$ annihilates the class group $C\ell(L)$.
\end{proof}
To prove Theorem \ref{T1} we need some preliminaries. We begin with the following elementary lemma (as was indicated in section 1).
\begin{lem}\label{L2}
If $L/K$ is a Galois extension of number fields and $H$ is the Hilbert class field of $L$, then $H/K$ is Galois.
\end{lem}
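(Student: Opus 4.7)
The plan is to verify normality of $H/K$ by showing that every $K$-embedding of $H$ into a fixed algebraic closure $\overline{K}$ (containing $H$) actually maps $H$ to itself; separability is automatic since we are in characteristic zero.

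So first I would fix an algebraic closure $\overline{K}$ of $K$ containing $H$ (and hence $L$), and let $\tau : H \to \overline{K}$ be an arbitrary $K$-embedding. Since $L/K$ is Galois by hypothesis, the restriction $\tau|_L$ is an automorphism of $L$, i.e.\ $\tau(L)=L$. Next, I would use the fact that the Hilbert class field is preserved under isomorphisms of the base: the image $\tau(H)$ is an abelian extension of $\tau(L)=L$ which is unramified at every prime of $L$ (because unramifiedness is a purely local property preserved by the isomorphism $\tau$, and $\tau$ carries primes of $L$ to primes of $L$). By the maximality characterization of the Hilbert class field, every unramified abelian extension of $L$ sits inside $H$, so $\tau(H)\subseteq H$.

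Then, since $[\tau(H):K]=[H:K]$ is finite and $\tau(H)\subseteq H$, the inclusion is an equality, i.e.\ $\tau(H)=H$. This shows $H/K$ is normal, and together with separability (characteristic zero) gives that $H/K$ is Galois.

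The only slightly delicate point is the claim that $\tau(H)/L$ is unramified abelian; this is where one needs to observe that $\tau$ induces a bijection between primes of $H$ above primes of $L$ and primes of $\tau(H)$ above primes of $\tau(L)=L$, preserving ramification indices, and that the Galois group $\operatorname{Gal}(\tau(H)/L)$ is isomorphic to $\operatorname{Gal}(H/L)$ via conjugation by $\tau$, hence abelian. This is the main (though essentially formal) obstacle, and once it is spelled out the maximality of $H$ closes the argument.
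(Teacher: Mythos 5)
Your argument is correct and is essentially the same as the paper's: both use that $\sigma(L)=L$ for any $K$-embedding $\sigma$ (since $L/K$ is Galois), then that $\sigma(H)/L$ is again unramified abelian, so maximality of the Hilbert class field forces $\sigma(H)\subseteq H$ and hence equality. Your version merely spells out the preservation of ramification and abelianness under $\sigma$, which the paper leaves implicit.
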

\begin{proof}
We fix an algebraic closure $\bar{\Q}$ of $\Q$. Let $\sigma \in Gal(\bar{\Q}/ K)$, then $\sigma (L)=L$, as $L/K$ is Galois. Since $H/L$ is maximal unramified hence so is $\sigma (H) / \sigma (L)$. Consequently $\sigma (H) \subset H$, proving that $H/K$ is Galois.
\end{proof}

For an unramified prime ideal $\mathfrak{p}$ in $L$, let 
$\left( \frac{\mathfrak{p}}{L/K} \right)$ denote the Frobenius of $\mathfrak{p}$ with respect to the extension $L/K$. If $L/K$ is abelian then we also write $\left( \frac{\mathbf{p}}{L/K} \right)$ for $\left( \frac{\mathfrak{p}}{L/K} \right)$, where $\mathbf{p}=\mathfrak{p} \cap \mathbb{O}_{K}$. From the definition of the Frobenius, we have the following lemma (see page 127 in \cite{GJJ}).
\begin{lem}\label{L1}
Let $L/K$ be a Galois extension of number fields and let $F$ be an intermediate field such that $F/K$ is Galois. Then for any unramified prime ideal $\mathfrak{p} \mbox{ of }L$ one has $\left( \frac{\mathfrak{p}}{L/K} \right)_{|F}=\left( \frac{\mathfrak{p} \cap \mathbb{O}_F}{F/K} \right)$.
\end{lem}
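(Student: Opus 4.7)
The plan is to use the uniqueness characterization of the Frobenius element at an unramified prime. Set $\sigma_L := \left( \frac{\mathfrak{p}}{L/K} \right)$, $\mathfrak{q} := \mathfrak{p} \cap \mathbb{O}_F$, and $\sigma_F := \left( \frac{\mathfrak{q}}{F/K} \right)$, and let $q := |\mathbb{O}_K/\mathbf{p}|$ denote the size of the base residue field. First I would observe that since $F/K$ is Galois, restriction gives a well-defined homomorphism $Gal(L/K) \to Gal(F/K)$, so $\sigma_L|_F$ makes sense as an element of $Gal(F/K)$. Also, $\mathfrak{q}$ is unramified in $F/K$ because $\mathfrak{p}$ is unramified in $L/K$ (the ramification index of $\mathbf{p}$ in $F$ divides the ramification index of $\mathbf{p}$ in $L$), so the symbol $\sigma_F$ is defined.

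The core of the argument is to verify that $\sigma_L|_F$ satisfies both defining properties of the Frobenius at $\mathfrak{q}$ over $K$; uniqueness will then force $\sigma_L|_F = \sigma_F$. For the decomposition property, I would use $\sigma_L(\mathfrak{p}) = \mathfrak{p}$ and intersect with $\mathbb{O}_F$ to obtain $\sigma_L|_F(\mathfrak{q}) = \sigma_L(\mathfrak{p}) \cap \mathbb{O}_F = \mathfrak{p} \cap \mathbb{O}_F = \mathfrak{q}$, placing $\sigma_L|_F$ in the decomposition group at $\mathfrak{q}$. For the Frobenius congruence, take $x \in \mathbb{O}_F \subset \mathbb{O}_L$; the defining property of $\sigma_L$ gives $\sigma_L(x) \equiv x^q \pmod{\mathfrak{p}}$, and since $\sigma_L|_F(x) \in \mathbb{O}_F$ the difference $\sigma_L|_F(x) - x^q$ lies in $\mathfrak{p} \cap \mathbb{O}_F = \mathfrak{q}$, so $\sigma_L|_F(x) \equiv x^q \pmod{\mathfrak{q}}$.

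Combining these two facts with the uniqueness of the Frobenius, I would conclude $\sigma_L|_F = \sigma_F$. There is no real obstacle here; the only subtle point is that one must use the same base field $K$ on both sides, so that the exponent $q = |\mathbb{O}_K/\mathbf{p}|$ appearing in the two defining congruences is literally the same number (as opposed to, e.g., the $F$-Frobenius of $\mathfrak{p}$, whose exponent would be $|\mathbb{O}_F/\mathfrak{q}|$). The hypothesis that $F/K$ (not merely $L/F$) is Galois is exactly what guarantees that the restriction map $Gal(L/K) \to Gal(F/K)$ is defined and that $\sigma_L(\mathbb{O}_F) = \mathbb{O}_F$, which is used in both verifications above.
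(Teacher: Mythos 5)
Your proof is correct and is exactly the standard verification the paper has in mind: the paper gives no written proof of Lemma \ref{L1}, simply asserting it follows ``from the definition of the Frobenius'' with a citation to Janusz, and your argument (check that $\left( \frac{\mathfrak{p}}{L/K} \right)_{|F}$ fixes $\mathfrak{q}=\mathfrak{p}\cap\mathbb{O}_F$ and induces the $q$-power map modulo $\mathfrak{q}$, then invoke uniqueness of the Frobenius at the unramified prime $\mathfrak{q}$) is precisely that definitional argument written out in full.
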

Next we recall the Chebotar\"{e}v density Theorem (see \cite{SL1}). For any $\sigma \in Gal(L/K)$, let $P_{L/K}(\sigma)$ denote the set of prime ideals $\mathbf{p}$ in $K$ such that there is a prime ideal $\mathfrak{p}$ of $L$ above $\mathbf{p}$ such that $\left( \frac{\mathfrak{p}}{L/K} \right) = \sigma $. 
\begin{thm} \label{DT}
[ Chebotar\"{e}v Density Theorem] Let $\sigma \in Gal(L/K)$ and $C_{\sigma}$ stand for the conjugacy class of $\sigma$ then the density of $P_{L/K}(\sigma)$ is $|C_{\sigma}|/[L:K]$.
\end{thm}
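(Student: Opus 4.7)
The plan is to prove Chebotarëv's density theorem via the classical two-step strategy: reduce to a cyclic subextension, and then apply the analytic theory of $L$-series in that cyclic case.

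For the reduction, given $\sigma \in G$ I would set $H = \langle \sigma \rangle$ and $F = L^H$, so that $L/F$ is cyclic with Galois group $H$. I relate primes of $K$ in $P_{L/K}(\sigma)$ to primes of $F$ whose Frobenius in $L/F$ equals $\sigma$ with residue degree one over $K$. Indeed, for an unramified prime $\mathfrak{p}$ of $L$ with $\mathrm{Frob}(\mathfrak{p}, L/K) = \sigma$, the decomposition group is exactly $H$, so $\mathbf{P} := \mathfrak{p}\cap \mathbb{O}_F$ has residue degree one over $\mathbf{p} := \mathfrak{p}\cap\mathbb{O}_K$ and Frobenius $\sigma$ in $L/F$ (using Lemma \ref{L1}). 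Among the conjugate primes $\tau\mathfrak{p}$ above a given $\mathbf{p}$, exactly $|Z_G(\sigma)|/|H|$ have Frobenius $\sigma$ in $L/K$ (corresponding to cosets of $Z_G(\sigma)/H$; note $H \subseteq Z_G(\sigma)$ since $H$ is abelian). Since $N\mathbf{P} = N\mathbf{p}$ for these primes, a Dirichlet density of $1/|H|$ for the $\mathbf{P}$'s (in $F$) translates via $|C_\sigma| = [G : Z_G(\sigma)]$ into a density of $(1/|H|)\cdot(|H|/|Z_G(\sigma)|) = |C_\sigma|/[L:K]$ for the $\mathbf{p}$'s (in $K$).

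For the cyclic case, for each character $\chi \in \widehat{H}$ I would introduce the Artin $L$-function
\[
L(s,\chi) = \prod_{\mathbf{P} \text{ unramified in } L/F} \bigl(1 - \chi(\sigma_\mathbf{P}) N\mathbf{P}^{-s}\bigr)^{-1},
\]
where $\sigma_\mathbf{P} = \mathrm{Frob}(\mathbf{P}, L/F)$. Orthogonality turns the indicator of $\{\sigma\} \subset H$ into $|H|^{-1}\sum_\chi \overline{\chi(\sigma)}\chi$, yielding
\[
\sum_{\sigma_\mathbf{P} = \sigma} N\mathbf{P}^{-s} = \frac{1}{|H|}\sum_{\chi\in\widehat{H}} \overline{\chi(\sigma)}\log L(s,\chi) + O(1) \qquad (s\to 1^+).
\]
The trivial character contributes $L(s,1) = \zeta_F(s)$ with its simple pole at $s=1$, producing the singular term $|H|^{-1}\log(s-1)^{-1}$; each non-trivial $\chi$ contributes a bounded term, so dividing by $\log(s-1)^{-1}$ and passing to the limit produces the Dirichlet density $1/|H|$.

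The main obstacle is the non-vanishing $L(1,\chi)\neq 0$ for every non-trivial character $\chi$ of the abelian group $H$. Via the Artin reciprocity map of class field theory, each such $L(s,\chi)$ identifies with a Hecke $L$-function attached to a ray-class character of $F$, and Hecke's theorem supplies the meromorphic continuation and the crucial non-vanishing at $s=1$. A more self-contained alternative factors $\zeta_L(s) = \prod_{\chi\in \widehat{H}} L(s,\chi)$, observes that $\zeta_L(s)$ has a simple pole at $s=1$ while every non-trivial $L(s,\chi)$ extends holomorphically across $s=1$, and then invokes Landau's lemma applied to the Dirichlet series $\zeta_L(s)/\zeta_F(s)$, which has non-negative coefficients, to force non-vanishing. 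Minor technicalities in the reduction step --- excluding the finite set of ramified primes (density zero) and, if desired, converting Dirichlet density to natural density --- are handled by standard arguments.
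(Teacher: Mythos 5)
The paper itself offers no proof of this statement: Theorem \ref{DT} is recalled as a known classical result with a pointer to Lang's \emph{Algebraic Number Theory}, so there is nothing internal to compare against, and what you have written is the standard textbook argument (Deuring's reduction to the cyclic layer followed by abelian $L$-functions), i.e.\ essentially the proof in the cited source. Your reduction step is correct: for $\mathfrak{p}$ with Frobenius exactly $\sigma$ the decomposition group is $\langle\sigma\rangle$, the prime $\mathfrak{p}\cap\mathbb{O}_F$ of $F=L^{\langle\sigma\rangle}$ has residue degree one over $K$ and Frobenius $\sigma$ in $L/F$, the fibre count over a fixed $\mathbf{p}$ is $|Z_G(\sigma)|/|\langle\sigma\rangle|$, and primes of $F$ of absolute residue degree at least $2$ contribute $O(1)$ to the relevant Dirichlet sums, so the density $1/|\langle\sigma\rangle|$ from the cyclic case converts to $1/|Z_G(\sigma)|=|C_\sigma|/[L:K]$ as you say.

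One concrete error, though it sits in your optional ``self-contained alternative'': $\zeta_L(s)/\zeta_F(s)$ does \emph{not} in general have non-negative Dirichlet coefficients. For a cyclic extension of degree $n$ and an unramified prime $\mathbf{P}$ of $F$ whose Frobenius has order $d>1$, the local factor of $\zeta_L/\zeta_F$ is $(1-N\mathbf{P}^{-s})\,(1-N\mathbf{P}^{-ds})^{-n/d}$, whose coefficient of $N\mathbf{P}^{-s}$ is $-1$. Landau's lemma must instead be applied to $\zeta_L(s)=\prod_{\chi}L(s,\chi)$ itself, which manifestly has non-negative coefficients: a zero of some non-trivial $L(s,\chi)$ at $s=1$ would cancel the pole of $\zeta_F$, make $\zeta_L$ holomorphic wherever it is defined, and force its Dirichlet series to converge on all of $\mathbb{R}$, which fails for small real $s$. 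Your primary route --- identifying each non-trivial $L(s,\chi)$ with a Hecke $L$-function via Artin reciprocity and quoting Hecke's non-vanishing at $s=1$ --- is sound and needs no repair.
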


\begin{proof}[\bf Proof of Theorem \ref{T1}]
For any $\sigma \in Gal(L/K)$ of order $f$, it is immediate to see that $$(\tilde{\sigma})^f \shortmid_L=id,$$ and thus $(\tilde{\sigma})^f \in Gal(H/L)$.\\
Assume that the set $\{(\tilde{\sigma})^f: \sigma \in Gal(L/K) \mbox{ and the order of }\sigma \mbox{ is }f\}$ generates the group $Gal(H/L)$. Let $\mathfrak{c}$ be an ideal class in $C\ell(L)$ and $\tau$ be the corresponding element in $Gal(H/L)$ under the Artin isomorphism between $C\ell(L)$ and the Galois group $Gal(H/L)$. Then there is a prime ideal $\mathfrak{p}$ in $\mathfrak{c}$ such that $$\left( \frac{\mathfrak{p}}{H/L} \right)=\tau.$$
From our assumption, there are elements $\sigma_1, \ldots, \sigma_r$ in $Gal(L/K)$ of order $f$ such that $$\tau=(\tilde{\sigma_1})^f \ldots (\tilde{\sigma_r})^f.$$  
By Chebotarev density theorem, for each $i, 1 \leq i \leq  r$ there exists prime ideal $\wp_i$ of $H$ such that $$\left( \frac{\wp_i}{H/K} \right)=\tilde{\sigma_i}.$$ 
Let $\mathfrak{p}_i=\wp_i \cap \mathbb{O}_L$ for $i=1, \ldots, r$. Then  from Lemma \ref{L1} it follows that $\left( \frac{\mathfrak{p}_i}{L/K} \right)=\sigma_i$, for $i=1, \ldots, r $. Since the order of $\sigma_i$ is $f$, we conclude that the residue degree of $\mathfrak{p}_i$ is $f$. Next we note that
$$\left( \frac{\wp_i}{H/L} \right)=(\tilde{\sigma_i})^f.$$
Since $H/L$ is abelian, we get
$$\left( \frac{\mathfrak{p}_i}{H/L} \right)=(\tilde{\sigma_i})^f.$$
This leads to 
$$\left( \frac{\mathfrak{p}}{H/L} \right)=\left( \frac{\mathfrak{p}_1}{H/L} \right) \ldots \left( \frac{\mathfrak{p}_r}{H/L} \right).$$ From the above equality, it follows that $\mathfrak{c}=[\mathfrak{p}_1] \ldots [\mathfrak{p}_r]$, as desired.\\
Conversely assume that the ideal class group $C\ell(L)$ is generated by the classes of prime ideals of residue degree $f$.\\ 
Let $\tau \in Gal(H/L)$ and let $\mathfrak{c}$ be the ideal class corresponding to $\tau $ under the Artin isomorphism. By our assumption, there are prime ideals $\mathfrak{p}_1, \ldots, \mathfrak{p}_r$ of residue degree $f$ such that $\mathfrak{c}=[\mathfrak{p}_1] \ldots [\mathfrak{p}_r]$. Put $\sigma_i=\left( \frac{\mathfrak{p}_i}{L/K} \right)$, for $i=1, \ldots, r$. Then it follows immediately that $\sigma_i$ is of order $f$ and $\tau=(\tilde{\sigma_1})^f \ldots (\tilde{\sigma_r})^f$. This proves the Theorem.
\end{proof}

\begin{rem}\label{r1}
From the proof of the Theorem \ref{T1}, it is immediate that the size of the subgroup generated by the set $\{(\tilde{\sigma})^f: \sigma \in Gal(L/K) \mbox{ is of order } f \}$ measures the size of the subgroup of the class group $C\ell(L)$ which is generated by the classes of prime ideals of residue degree $f$. In particular, if the class number of $L$ is prime and $f \not \in \mathscr{R}_K^L$ then all the prime ideals of residue degree $f$ are principal in $L$.
\end{rem}

\begin{proof}
[\bf Proof of Theorem \ref{T2}]
Let $p$ be a prime of residue degree $f>1$ which is unramified in $L$. We shall show that there is an element $\alpha \in L$ such that $|N_{L/K}(\alpha)|=p^f$.\\
Since the class number of $L$ is a prime number and $\mathscr{R}_{\mathbb{Q}}^L=\{1\}$, the subgroup generated by the set $$\{(\tilde{\sigma})^f: \sigma \in Gal(L/K) \mbox{ and the order of }\sigma \mbox{ is }f\}$$ is trivial. Consequently, from the Remark \ref{r1}, all the prime ideals $\mathfrak{p}$ of $L$ above $p$ are principal. Let $\mathfrak{p}$ be a prime ideal of $L$ dividing $p$ and $\alpha$ be a generator of $\mathfrak{p}$. Then we have
$$|N_{L/\mathbb{Q}}(\alpha)|=p^f.$$
The theorem follows at once from the multiplicative property of the norm map.\\
\end{proof}


\section{An example}
In this section, we consider the fields $L=\Q(\zeta_{23})$ and $K= \Q$ and show that $\mathscr{R}_K^L=\{1\}$. Note that, the condition `unramified' in the definition of $\mathscr{R}_K^L$ is redundant in this case. If $\mathfrak{p}$ is a prime ideal in $\Q(\zeta_{\ell})$ of residue degree $f$ then $f|22$, and thus $f \in \{1,2,11,22\}$. Since the class number of $\Q(\zeta_{23})$ is bigger than $1$, it follows that $f=22$ is not possible. So it remains to show that $f=2$ or $f=11$ is not possible.

Before proceeding further, we recall some results on cyclotomic fields which will be needed. Let $h_{\ell}^+$ and $h_{\ell}$ denote the class numbers of $\Q(\zeta_{\ell}+\zeta_{\ell}^{-1})$ and $\Q(\zeta_{\ell})$ respectively and put  $h_{\ell}^{-}=h_{\ell}/h_{\ell}^+$. Let $G$ be the Galois group $Gal(\Q(\zeta_{\ell})/ \Q)$ and $S$ denote the Stickelberger ideal in $\Z[G]$. It is well known that $[\Z[G]:S]=h_{\ell}^{-}$. We now describe a basis of $S$ (see chapter 9 in \cite{RS}).\\
For each $a \geq 1$ with $(a,\ell)=1$ we define 
$$\theta_a=\sum_{i=1}^{\ell-1} \lfloor\frac{ai}{\ell}\rfloor \sigma_i^{-1},$$ where $\sigma_i:\zeta_{\ell} \longmapsto \zeta_{\ell}^i$ and $\lfloor x \rfloor$ denotes the largest integer not bigger than $x$. Further we put $f_i=\theta_{i+1}-\theta_i$. Then we have following Theorem due to Kummer.
\begin{thm}\label{TA3}
[Theorem 9.3, \cite{RS}] The elements $f_1, \ldots, f_{(p-1)/2}$ together with the $G$-trace $N=\sum_{\sigma \in G} \sigma$ forms a $\Z-basis$ of $S$. 
\end{thm}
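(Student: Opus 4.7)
The plan is to prove membership of each $f_i$ and $N$ in $S$, give an explicit $\mathbb{Z}$-basis of the ``denominator module'' $I$, reduce all generators of $S$ to the $\theta_a$'s and $N$, collapse via telescoping and symmetry to the set $\{f_1,\ldots,f_{(\ell-1)/2}\}$, and finally verify $\mathbb{Z}$-linear independence.

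First, membership. A direct manipulation of the defining formula yields $\sigma_c\theta=\sum_b\tfrac{bc\,\bmod\,\ell}{\ell}\sigma_b^{-1}$, hence $\sigma_c\theta=c\theta-\theta_c$. Thus $\theta_a=(a-\sigma_a)\theta$ has integer coefficients, giving $\theta_a\in S$ and therefore $f_i=\theta_{i+1}-\theta_i\in S$. Specializing $c=-1$ gives $\sigma_{-1}\theta=N-\theta$, whence $N=(1+\sigma_{-1})\theta\in S$.

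Second, the reduction. Let $I=\{\beta\in\mathbb{Z}[G]:\beta\theta\in\mathbb{Z}[G]\}$. A short computation using $\theta=\tfrac{1}{\ell}\sum_a a\sigma_a^{-1}$ shows that $\beta=\sum b_c\sigma_c$ lies in $I$ iff $\sum_c cb_c\equiv 0\pmod\ell$; from this one extracts $\{a-\sigma_a:2\leq a\leq\ell-1\}\cup\{\ell\}$ as a $\mathbb{Z}$-basis of $I$. Consequently $S=I\theta$ is $\mathbb{Z}$-generated by $\{\theta_a:2\leq a\leq\ell-1\}\cup\{\ell\theta\}$, and the identity $\ell\theta=N+\theta_{\ell-1}$ (obtained by expanding $\sum_b b\sigma_b^{-1}$) reduces this to $\{\theta_1,\ldots,\theta_{\ell-1},N\}$. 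Telescoping $\theta_a=\sum_{i=1}^{a-1}f_i$ puts every $\theta_a$ in the $\mathbb{Z}$-span of the $f_i$; the elementary floor identity $\lfloor ab/\ell\rfloor+\lfloor(\ell-a)b/\ell\rfloor=b-1$ (for $1\leq b\leq\ell-1$ and $(a,\ell)=1$) gives $\theta_a+\theta_{\ell-a}=\theta_{\ell-1}$, and differencing the $a$ and $a+1$ versions produces $f_{\ell-1-i}=f_i$. Hence only $f_1,\ldots,f_{(\ell-1)/2}$ are distinct, and together with $N$ they $\mathbb{Z}$-span $S$.

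Third, linear independence. The coefficient of $\sigma_b^{-1}$ in $f_i$ equals $1$ precisely when $ib\bmod\ell\geq\ell-b$, and is $0$ otherwise; in particular each $f_i$ has coefficient $0$ at $\sigma_1^{-1}$ while $N$ has coefficient $1$ there, so $N$ is independent of the $f_i$. Independence among $f_1,\ldots,f_{(\ell-1)/2}$ then follows from exhibiting a non-vanishing $\tfrac{\ell-1}{2}\times\tfrac{\ell-1}{2}$ minor of the coefficient matrix; the natural candidate is the sub-matrix on columns $b=\tfrac{\ell+1}{2},\ldots,\ell-1$ (of which the column $b=\ell-1$ is the all-ones vector), whose determinant can be computed to be $\pm1$ by direct row reduction.

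The main obstacle is the explicit identification of the $\mathbb{Z}$-basis of $I$ in step two, together with the verification of the floor-function identities $\ell\theta=N+\theta_{\ell-1}$ and $\theta_a+\theta_{\ell-a}=\theta_{\ell-1}$; once these are in hand, the remaining work (telescoping, symmetry reduction, and the minor computation for independence) is mechanical bookkeeping.
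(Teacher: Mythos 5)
The paper does not actually prove this statement --- it is imported verbatim as Theorem 9.3 of Schoof's book \cite{RS} --- so your argument has to stand entirely on its own. Your first two steps do: the identity $\sigma_c\theta=c\theta-\theta_c$, hence $\theta_a=(a-\sigma_a)\theta\in S$ and $N=(1+\sigma_{-1})\theta\in S$; the description of $I$ by the single congruence $\sum_c c\,b_c\equiv 0\pmod{\ell}$ and the basis $\{a-\sigma_a\}\cup\{\ell\}$ (whose matrix has determinant $\pm\ell$, matching the index of $I$ in $\Z[G]$); the identity $\ell\theta=N+\theta_{\ell-1}$; the telescoping $\theta_a=\sum_{i<a}f_i$; and the symmetry $f_i=f_{\ell-1-i}$ from $\theta_a+\theta_{\ell-a}=\theta_{\ell-1}$. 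All of this is correct and yields the spanning statement by the standard route.

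The gap is in step three. Splitting off $N$ via the coefficient of $\sigma_1^{-1}$ is fine, but the independence of $f_1,\ldots,f_{(\ell-1)/2}$ is reduced to the bare assertion that the $\frac{\ell-1}{2}\times\frac{\ell-1}{2}$ matrix with entries $[\,ib\bmod\ell\geq\ell-b\,]$, $b=\frac{\ell+1}{2},\ldots,\ell-1$, has determinant $\pm1$ ``by direct row reduction.'' That is not mechanical bookkeeping: the entries depend on the multiplicative structure of $(\Z/\ell\Z)^{\times}$, and closely related $0$--$1$ residue matrices (e.g.\ the Demjanenko matrix) have determinants equal to $h_{\ell}^{-}$ up to elementary factors, not $\pm1$; no uniform row reduction is exhibited, and checking small $\ell$ proves nothing for general $\ell$. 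Worse, the reduction is circular as stated: since the columns satisfy $\epsilon_i(\ell-b)=1-\epsilon_i(b)$ and your chosen columns include the all-ones column $b=\ell-1$, they span the full column space, so your minor is nonzero \emph{if and only if} the $f_i$ are independent --- i.e.\ evaluating that determinant \emph{is} the theorem, not a finishing touch. This is precisely the nontrivial half of Kummer's result. The standard ways to close it are either (i) to show $\mathrm{rank}_{\Z}\,S=\frac{\ell+1}{2}$ by computing the character components of $\theta$, which rests on $\sum_a a\bar{\chi}(a)\neq 0$ (that is, $B_{1,\bar{\chi}}\neq 0$, equivalently $L(1,\chi)\neq 0$) for odd $\chi$ --- a genuine analytic/arithmetic input --- after which a spanning set of cardinality equal to the rank is automatically a basis; or (ii) a careful combinatorial argument as in \cite{RS}. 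As written, step three needs an actual proof.
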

We recall following fact (see Theorem 1.1 in \cite{JM}).
\begin{thm}\label{TA4}
We have $h_{\ell}^+=1$ for $\ell<100$.
\end{thm}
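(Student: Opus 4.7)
The plan is to estimate $h_\ell^+$ via the analytic class number formula for the totally real cyclotomic field $F_\ell := \Q(\zeta_\ell + \zeta_\ell^{-1})$, and then to push the resulting inequality to $h_\ell^+ < 2$ prime by prime. Setting $n = (\ell-1)/2 = [F_\ell:\Q]$, the factorization $\zeta_{F_\ell}(s) = \prod_{\chi \text{ even}} L(s,\chi)$ and the class number formula together give
$$
h_\ell^+ \cdot R_\ell^+ \;=\; \frac{\sqrt{d_{F_\ell}}}{2^{n-1}} \prod_{\substack{\chi \text{ even}\\ \chi \ne 1}} L(1,\chi),
$$
where $R_\ell^+$ is the regulator and $d_{F_\ell} = \ell^{n-1}$ is known exactly. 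Everything reduces to estimating each $L(1,\chi)$ from above and $R_\ell^+$ from below tightly enough.

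First I would bound each $L(1,\chi)$ from above: truncate the Dirichlet series and control the tail by partial summation together with the P\'olya--Vinogradov inequality $\bigl|\sum_{n \le N} \chi(n)\bigr| \ll \sqrt{\ell}\,\log \ell$, producing an explicit numerical bound depending only on $\ell$. Second, I would lower-bound $R_\ell^+$ using the unconditional Odlyzko (or Zimmert) regulator inequalities for totally real fields of degree $n$; as a complementary tool one can compute the regulator of the subgroup of cyclotomic units, which in $F_\ell$ has index exactly $h_\ell^+$ in the full unit group and thus gives both a lower bound and a useful sanity check. Combining these two ingredients with the exact value of $d_{F_\ell}$ converts the class number formula into an explicit numerical bound $h_\ell^+ \le B_\ell$ for each prime $\ell < 100$, and the target is $B_\ell < 2$.

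The main obstacle is that as $\ell$ grows towards $100$, the degree $n$ reaches $49$, so $\sqrt{d_{F_\ell}} = \ell^{(n-1)/2}$ is astronomically large while generic lower bounds on $R_\ell^+$ grow only subexponentially in $n$; the resulting $B_\ell$ can exceed $2$ if the inputs are not sharpened. The remedy, following the line of work of Masley, van der Linden, and Miller cited in \cite{JM}, is either to optimize the test functions in the Odlyzko-style analysis for the specific degrees arising, or to compute the cyclotomic-unit regulator numerically for the handful of stubborn primes (in practice $\ell \in \{71, 79, 83, 89, 97\}$ are the hardest). Either refinement yields $B_\ell < 2$ for every prime $\ell < 100$, and hence $h_\ell^+ = 1$ throughout the range.
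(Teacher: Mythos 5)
First, note that the paper does not actually prove this statement: Theorem \ref{TA4} is quoted verbatim from Miller's work (Theorem 1.1 of \cite{JM}), which in turn builds on Masley and van der Linden. So you are attempting a proof of something the paper only cites, and unfortunately your sketch has a genuine gap at exactly the point where the real difficulty lies. The class number formula gives $h_\ell^+ R_\ell^+ = \frac{\sqrt{d_{F_\ell}}}{2^{n-1}}\prod_{\chi\neq 1}L(1,\chi)$, and this product is in fact \emph{exactly} the regulator $R_C$ of the cyclotomic units, since each $L(1,\chi)$ for even $\chi\neq 1$ has a closed form as a logarithm of a cyclotomic unit. Computing $R_C$ numerically therefore tells you the product $h_\ell^+R_\ell^+$ and nothing more; your remark that the cyclotomic-unit regulator ``gives both a lower bound and a useful sanity check'' is circular, because $R_C = h_\ell^+ R_\ell^+$ cannot be used to separate the two factors. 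To extract $h_\ell^+$ you must lower-bound $R_\ell^+$ (equivalently, upper-bound the index $[E:C]$) \emph{independently}, and that is the hard step your proposal does not supply. Quantitatively: for $\ell=97$ one has $n=48$, $\sqrt{d_{F_\ell}}=97^{45/2}\approx 10^{44.7}$, and even with P\'olya--Vinogradov bounds on the $L(1,\chi)$ the right-hand side is of order $10^{50}$ or more, while the unconditional Zimmert/Odlyzko regulator lower bounds for totally real fields of degree $48$ are of order $e^{0.46\cdot 48}\approx 10^{9}$. The gap is some forty orders of magnitude, and no amount of ``optimizing the test functions'' closes it; this is precisely why $h_\ell^+$ for $71\le \ell\le 97$ resisted unconditional determination until 2014--2015.

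The methods that actually work are structurally different. The classical route (Masley, van der Linden) applies Odlyzko's \emph{discriminant} bounds not to the regulator but to the Hilbert class field $H^+$ of $F_\ell$: since $H^+/F_\ell$ is unramified, $H^+$ has the same root discriminant $\ell^{(n-1)/n}<\ell$ as $F_\ell$ but degree $n h_\ell^+$, and for large degree a totally real field must have large root discriminant. Unconditionally this asymptotic lower bound is about $60.8$, which already fails to contradict root discriminants near $90$ (e.g.\ $\ell=97$), so this argument alone only settles the smaller primes (roughly $\phi(\ell)\le 66$) unconditionally and the rest under GRH. Miller's contribution, which is what the paper is citing, is to remove GRH by a second, algebraic--computational device: for each candidate prime divisor $p$ of $h_\ell^+$ one finds auxiliary primes $q$ splitting suitably in $F_\ell$ and verifies that appropriate cyclotomic units are not $p$-th powers modulo primes above $q$, which forces $p\nmid h_\ell^+$ via the index formula $[E:C]=h_\ell^+$ and a reflection-type argument. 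If you want a self-contained proof of Theorem \ref{TA4}, you would need to reproduce that two-pronged strategy; the purely analytic route you describe is known to be infeasible in this range.
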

Now we fix $\ell=23$, we have $h_{23}=h_{23}^{-}$. Thus, if $\theta \in \Z[G]$ annihilates the class group of $\Q(\zeta_{23})$ then $\theta $ must lie in $S$. In the case of $\Q(\zeta_{23})$, we have
$$f_1=\sum_{i \in I_1} \sigma_i, \mbox{ where } I_1=\{2,16,5,20,13,19,9,17,15,11,22  \},$$
$$f_2=\sum_{i \in I_2} \sigma_i, \mbox{ where } I_2=\{3,18,7,21,13,19,9,17,15,11,22  \},$$
$$f_3=\sum_{i \in I_3} \sigma_i, \mbox{ where } I_3=\{4,10,2,16,5,20,9,17,15,11,22  \},$$
$$f_4=\sum_{i \in I_4} \sigma_i, \mbox{ where } I_4=\{2,16,5,20,13,19,9,17,15,11,22  \},$$
$$f_5=\sum_{i \in I_5} \sigma_i, \mbox{ where } I_5=\{6,3,18,2,16,13,19,9,15,11,22  \},$$
$$f_6=\sum_{i \in I_6} \sigma_i, \mbox{ where } I_6=\{10,7,21,5,20,19,9,17,15,11,22  \},$$
$$f_7=\sum_{i \in I_7} \sigma_i, \mbox{ where } I_7=\{8,4,18,2,16,20,13,9,17,11,22  \},$$
$$f_8=\sum_{i \in I_8} \sigma_i, \mbox{ where } I_8=\{3,21,16,5,13,19,9,17,15,11,22  \},$$
$$f_9=\sum_{i \in I_9} \sigma_i, \mbox{ where } I_9=\{14,10,7,2,5,20,19,17,15,11,22  \},$$
$$f_{10}=\sum_{i \in I_{10}} \sigma_i, \mbox{ where } I_{10}=\{18,21,16,20,13,19,9,17,15,11,22  \},$$
$$f_{11}=\sum_{i \in I_{11}} \sigma_i, \mbox{ where } I_{11}=\{12,6,4,3,7,2,5,13,9,15,22  \}.$$
If the class group of $\Q(\zeta_{23})$ is generated by prime ideals of residue degree $11$ then by Theorem \ref{T3} the element $\theta_{11}$ is an annihilator of the class group of $\Q(\zeta_{23})$. Thus we have $\theta_{11} \in S$. Hence, from Theorem \ref{TA3}, there are integers $a_0, \ldots, a_{11}$ such that 
\begin{equation}\label{e1}
\theta_{11}=a_0N+a_1f_1+\ldots+a_{11}f_{11}.
\end{equation}
For any prime ideal $\mathfrak{p}$ of $\Q(\zeta_{23})$ of residue degree $11$ the decomposition group is 
$$D_{\mathfrak{p}}=\{ \sigma_2,\sigma_4,\sigma_8,\sigma_{16},\sigma_9,\sigma_{18},\sigma_{13},\sigma_3,\sigma_6,\sigma_{12},\sigma_1  \}.$$
Thus $\{\sigma_1,\sigma_5\}$ is a complete set of coset representatives of $G/D_{\mathfrak{p}}$. Note that any other set of coset representatives of $G/D_{\mathfrak{p}}$ is a multiple by an element of $D_{\mathfrak{p}}$. Hence, without loss of generality we take 
\begin{equation}\label{e2}
\theta_{11}=\sigma_1+\sigma_5.
\end{equation}
Comparing the coefficients of $\sigma_i$ in equations (\ref{e1}) and (\ref{e2}) we obtain a contradiction as explained below. \\
Comparing the coefficient of $\sigma_1$ leads to $a_0=1$ and comparing the coefficient of $\sigma_{12}$ leads to $a_{11}=-1$. On the other hand comparing the coefficients of $\sigma_{11}$ gives
\begin{equation}\label{e3}
1+a_1+a_2+\ldots+a_{10}=0
\end{equation}
and from the coefficients of $\sigma_{22}$ we obtain 
\begin{equation}\label{e4}
1+a_1+a_2+\ldots+a_{11}=0.
\end{equation}
Equations ($\ref{e3})$ and ($\ref{e4}$) together give $a_{11}=0$ which contradicts to $a_{11}=-1$. Thus $\theta_{11} \not \in S$ and it follows that the class group of $\Q(\zeta_{23})$ is not generated by the classes of prime ideals of residue degree $11$.\\
Next, we show that the class group of $\Q(\zeta_{23})$ is not generated by the classes of prime ideals of residue degree $2$. If $\mathfrak{p}$ is a prime ideal of residue degree $2$, then the decomposition group at $\mathfrak{p}$ is $$D_{\mathfrak{p}}=\{\sigma_1,\sigma_{22}\}.$$ 
As done earlier, we may assume that 
\begin{equation}\label{e5}
\theta_2=\sigma_1+\ldots +\sigma_{11}.
\end{equation}
From Theorem \ref{T3} $\theta_2$ annihilates the class group of $\mathbb{Q}(\zeta_{23})$ and thus $\theta_2 \in S$. From Theorem \ref{TA3}, there are integers $a_0, \ldots, a_{11}$ such that
\begin{equation}\label{e6}
\theta_2=a_0N+a_1f_1+\ldots+a_{11}f_{11}.
\end{equation}
The equations (\ref{e5}) and (\ref{e6}) leads to an inconsistent system of equation in $\sigma_i, 1 \leq i \leq 22$ and this is summarized below.\\
Coefficients of $\sigma_1$ gives $a_0=1$, coefficients of $\sigma_8$ gives $a_7=0$ and coefficients of $\sigma_{12}$ gives $a_{11}=-1$. Using these, the coefficients of $\sigma_4$ gives $a_3=1$ and coefficients of $\sigma_6$ gives $a_5=1$. Using these values in the relations obtained from coefficients of $\sigma_{11}$ and $\sigma_{22}$ further leads to $a_9=0$. Now coefficients of $\sigma_{14}$ leads to $a_4=-1$. In same way, coefficients of $\sigma_2$ leads to $a_1=-1$, coefficients of $\sigma_{10}$ leads to $a_6=-1$. Further the coefficients of $\sigma_{20}$ gives $a_{10}=0$. Now we see that the coefficients of $\sigma_3$ and that of $\sigma_{21}$ lead to the inconsistent system
$$a_2+a_8=-1 \mbox{ and } a_2+a_8=0.$$
Thus we have proved that
$$\mathscr{R}_{\mathbb{Q}}^{\mathbb{Q}(\zeta_{23})}=\{1\}.$$

\begin{rem}
Proceeding along the same line, we have made computations for $L=\Q(\zeta_{29})$ and $K=\Q$ and found that $\mathscr{R}_K^L=\{1\}$.
\end{rem}

Now we give a complete description of solvability of norm equations for $\mathbb{Q}(\zeta_{23})/ \mathbb{Q}$. Since the splitting type of any rational prime $q$ in $\mathbb{Q}(\zeta_{23})$ is well understood (see chapter 3 in \cite{LW}), the description in Theorem \ref{T4} is the best one can expect.
\begin{thm}\label{T4}
For any $a \in \mathbb{Q}$ the norm equation 
$$N_{\mathbb{Q}}^{\mathbb{Q}(\zeta_{23})}(x)=a$$ is solvable if and only if $\upsilon_p(a)$ is a multiple of the residue degree of $p$ for all primes $p$ and $a \geq 0$. Consequently the knot number of $\mathbb{Q}(\zeta_{23})$ is $1$.
\end{thm}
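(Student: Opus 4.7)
The plan is to establish the two implications separately. The forward direction is immediate: if $a = N_{L/\mathbb{Q}}(x)$, factoring the fractional ideal $(x) = \prod_{\mathfrak{p}} \mathfrak{p}^{e_\mathfrak{p}}$ and taking absolute norms yields $v_p(a) = f_p \sum_{\mathfrak{p}\mid p} e_\mathfrak{p}$, divisible by $f_p$ since all primes of $L$ above $p$ share a common residue degree in the Galois extension $L/\mathbb{Q}$. The sign condition $a \ge 0$ comes from the CM structure of $L$: $N_{L/L^+}(x) = x\bar x$ is totally positive in $L^+$, so its further norm to $\mathbb{Q}$ is non-negative.

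For the converse, multiplicativity of the norm reduces the problem to producing, for each rational prime $p$, an element $\alpha_p \in L^*$ with $N_{L/\mathbb{Q}}(\alpha_p) = p^{v_p(a)}$. The easy cases are handled by earlier results. For $p = 23$, the classical identity $N_{L/\mathbb{Q}}(1-\zeta_{23}) = \Phi_{23}(1) = 23$ suffices (the divisibility condition is vacuous since $f_{23}=1$). For unramified $p$ with $f_p \in \{2,11\}$, Remark~\ref{r1} applies: because $|C\ell(L)|=3$ is prime and $f_p \notin \mathscr{R}^L_\mathbb{Q} = \{1\}$ by the preceding computation, every prime of $L$ above $p$ is principal; a generator has norm of absolute value $p^{f_p}$, and CM positivity upgrades this to $N(\alpha)=p^{f_p}$, so the $(v_p(a)/f_p)$-th power settles these cases.

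The main obstacle is the completely split case $f_p=1$, $p\ne 23$, where individual primes above $p$ need not be principal. The decisive input is $h_{23}^+=1$ (Theorem~\ref{TA4}): for any ideal $\mathfrak{b}$ of $L$, the product $\mathfrak{b}\cdot J\mathfrak{b}$ is the extension to $\mathcal{O}_L$ of $N_{L/L^+}(\mathfrak{b})$, so $(1+J)C\ell(L)$ lies in the image of $C\ell(L^+)=0$; combined with the fact that the only non-trivial automorphism of $\mathbb{Z}/3\mathbb{Z}$ is $-1$, complex conjugation $J\in G$ acts as $-1$ on $C\ell(L)\cong\mathbb{Z}/3\mathbb{Z}$. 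Consequently the element $\theta=2-J\in\mathbb{Z}[G]$ has augmentation $1$ and annihilates every class, since $(2-J)c=2c-(-c)=3c=0$. Pick any prime $\mathfrak{p}$ above $p$; because $p$ splits completely, the decomposition group is trivial, $J\mathfrak{p}\ne\mathfrak{p}$, and the fractional ideal $\mathfrak{a}:=\mathfrak{p}^2(J\mathfrak{p})^{-1}$ is well-defined of absolute norm $p^2/p=p$. Its class is $\theta\cdot[\mathfrak{p}]=0$, so $\mathfrak{a}=(\alpha_p)$ is principal, and CM positivity gives $N(\alpha_p)=p$; a $v_p(a)$-th power supplies the element needed.

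The concluding statement about the knot number then follows at once: Theorem~\ref{T4} identifies $N_{L/\mathbb{Q}}(L^*)\cap\mathbb{Q}^*$ by purely local constraints (positivity at $\infty$ and divisibility $f_p\mid v_p(a)$ at each finite prime), which are precisely the conditions for $a$ to lie in every local norm subgroup. Hence the kernel of the localisation map $\mathbb{Q}^*/N_{L/\mathbb{Q}}(L^*)\to\prod_v\mathbb{Q}_v^*/N(L_v^*)$ is trivial, i.e.\ the knot number of $\mathbb{Q}(\zeta_{23})$ equals $1$, in accordance with Hasse's norm theorem for the cyclic extension $L/\mathbb{Q}$.
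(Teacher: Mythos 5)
Your proposal is correct, and it follows the paper step for step except at the decisive point --- the completely split primes --- where you take a genuinely different route. The paper's argument there is purely numerical: since the class number is $3$, the ideal $\mathfrak{p}^3$ is principal with generator $\beta$ of norm $p^3$, while $N(p)=p^{22}$; a Bezout relation $3s+22t=1$ then yields $\alpha=\beta^s p^t$ of norm $p$. The only input is $\gcd\bigl(h_L,[L:\mathbb{Q}]\bigr)=1$, which is exactly why the paper can record, immediately after the proof, the generalization to any $L$ with that coprimality. You instead invoke $h_{23}^+=1$ (Theorem \ref{TA4}) to show that $1+J$ kills $C\ell(L)$, hence complex conjugation acts as $-1$ and $2-J$ is an annihilator of augmentation $1$; applying it to $\mathfrak{p}$ gives the principal fractional ideal $\mathfrak{p}^2(J\mathfrak{p})^{-1}$ of norm $p$. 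Both arguments are valid, and each generalizes along a different axis: yours to CM fields with trivial plus part and class number prime to $2$ (more precisely, wherever an annihilator of augmentation $1$ can be manufactured from $1+J$ and $h_L$), the paper's to arbitrary $L$ with $\gcd(h_L,[L:\mathbb{Q}])=1$. Two minor points: your case list $f_p\in\{2,11\}$ omits the inert primes $f_p=22$, which are trivially handled since $(p)$ is itself principal of norm $p^{22}$ (and your appeal to Remark \ref{r1} covers them verbatim); and your closing paragraph on the knot number supplies a justification that the paper leaves implicit, correctly observing that the solvability criterion just proved coincides with the everywhere-local conditions.
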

\begin{proof}
Since $\mathbb{Q}(\zeta_{23})$ is totally complex, the values of norm map are non-negative. It suffices to show that for each prime $q$ there is an $\alpha \in \mathbb{Q}(\zeta_{23})$ such that 
\begin{equation}\label{e7}
N_{\mathbb{Q}}^{\mathbb{Q}(\zeta_{23})}(\alpha)=p^f, 
\mbox{ where }f \mbox{ is the residue degree of } p.
\end{equation}

For $p=23$, equation (\ref{e7}) holds for $\alpha=1-\zeta_{23}$. When $p$ is a prime of residue degree $f>1$, then $p$ is unramified and existence of an $\alpha$ satisfying equation (\ref{e7}) is guaranteed  from the Theorem \ref{T2}.

Now assume that $p$ splits completely in $\mathbb{Q}(\zeta_{23})$, and let $\mathfrak{p}$ be a prime ideal of $\mathbb{Q}(\zeta_{23})$. If $\mathfrak{p}$ is principal then we are done. The class number of $\Q(\zeta_{23})$ is $3$ (see \cite{LW,JM}). Consequently there is a $\beta \in \Q(\zeta_{23})$ such that 
$$N_{\mathbb{Q}}^{\mathbb{Q}(\zeta_{23})}(\beta)=p^3.$$
On the other hand 
$$N_{\mathbb{Q}}^{\mathbb{Q}(\zeta_{23})}(p)=p^{22}.$$
Let $s,t \in \mathbb{Z}$ be such that $3s+22t=1$ then equation (\ref{e7}) holds for $\alpha =\beta^sp^t$.
\end{proof}

From the proof of Theorem \ref{T4} it also follows that if the class number $h_L$ of $L$ and the degree $[L:\mathbb{Q}]$ are coprime then the norm equation 
$$|N_{\mathbb{Q}}^{L}(\alpha)|=p^f, 
\mbox{ where }f \mbox{ is the residue degree of } p$$ is solvable for each prime $p$.

\section{Concluding remarks}
The purpose of this article is to convince the reader that the problem ``whether prime ideals of residue degree bigger than one are well distributed across the ideal class group or not" is a useful problem. In general, we are not aware of any method to tackle this problem; the analytic methods which successfully tackle the distribution of prime ideals of residue degree one are limited to prime ideals of residue degree one.

From the two examples we made computations for, it is tempting to look for some relation between  `$\mathscr{R}_{\mathbb{Q}}^{\mathbb{Q}(\zeta_{\ell})}=\{1\}$' and `$h_{\ell}^+=1$'.

The study carried out here is in the spirit that `look at subfields of $L$ to get information on $L$'. Such studies has been carried out earlier too, for example see \cite{MR}.\\

\par
\par
\noindent{\bf Acknowledgements.} The author would like to express his gratitude to Prof. Dipendra Prasad for some very fruitful discussions and making some corrections in earlier versions.

\end{document}